\newtheorem{thm}{Theorem}[section]
\newtheorem{prop}[thm]{Proposition}
\newtheorem{claim}{Claim}
\newtheorem*{thm*}{Theorem}
\newtheorem*{claim*}{Claim}
\theoremstyle{remark}
\theoremstyle{definition}
\newtheorem{defn}[thm]{Definition}
\newcommand\Acal{\mathcal{A}}
\newcommand\Dcal{\mathcal{D}}
\newcommand\Tcal{\mathcal{T}}
\newcommand\Zcal{\mathcal{Z}}
\newcommand\Qbb{\mathbb{Q}}
\newcommand\Nbb{\mathbb{N}}
\newcommand\Zbb{\mathbb{Z}}
\newcommand{\Seq}[1]{\langle #1 \rangle}
\newcommand{\symdif}{\triangle}
\title[A note on Shavgulize's proof]
{A note on Shavgulidze's papers concerning the amenability problem for Thompson's group $F$}
\author{Justin Tatch Moore}
\address{Department of Mathematics \\ Cornell University\\
Ithaca, NY 14853--4201 \\ USA}
\email{{\tt justin@math.cornell.edu}}
\begin{document}

\maketitle

The purpose of this note is to examine E.
Shavgulidze's publications \cite{amen_disc_diff:abs}, \cite{Thompson_F_amen:Sh}
and ArXiv postings \cite{amen_diffeo:ArXiv}, \cite{amen_diffeo2:ArXiv} in which he claims to
have solved the amenability problem for Thompson's group $F$.
There is a consensus among those who have carefully studied these papers that not only do they
contain errors, but that the errors are serious and do not seem to be
repairable.

Unfortunately,  both \cite{amen_disc_diff:abs} and \cite{Thompson_F_amen:Sh}
have been published and are now being referred to
as though they represent a correct solution to the amenability problem for $F$.
This is further complicated by the fact that an inaccurate review of \cite{amen_disc_diff:abs} has appeared in
Math Reviews MR2486813.
Also, reviews of \cite{amen_disc_diff:abs} and \cite{Thompson_F_amen:Sh} on Zentralblatt MATH
treat these articles as correct.
Finally, to my knowledge, Shavgulidze has made no public acknowledgment that there are problems
with \cite{amen_disc_diff:abs} and \cite{Thompson_F_amen:Sh}.

Before proceeding, I will briefly describe the papers and preprints of Shavgulidze under discussion.
\cite{amen_disc_diff:abs} is an extended abstract  
in which Shavgulidze announces that he has proved a general
theorem which has, as a corollary, that Thompson's group $F$ is amenable.
Contrary to what is asserted in Math Reviews MR2486813, \cite{amen_disc_diff:abs} does not contain proofs
of the results it announces.
\cite{Thompson_F_amen:Sh} is a longer paper 
which contains the proof which is outlined in \cite{amen_disc_diff:abs}.
\cite{amen_diffeo:ArXiv} is a preprint of \cite{Thompson_F_amen:Sh} posted to the ArXiv.
\cite{amen_diffeo2:ArXiv} is a preprint posted to the ArXiv which apparently attempts to repair the problem
known to exist in \cite{Thompson_F_amen:Sh}, \cite{amen_diffeo:ArXiv}.
It should be noted however that \cite{amen_diffeo:ArXiv} and \cite{amen_diffeo2:ArXiv} are separate postings
to the ArXiv, not different versions of the same posting.
Furthermore, \cite{amen_diffeo2:ArXiv} makes no reference to any of the other papers mentioned above
and in particular does not indicate that there are errors in \cite{Thompson_F_amen:Sh},
\cite{amen_diffeo:ArXiv}.

The present note aims to both point to serious errors in these papers and to
argue more generally why the approach taken by these papers seems unlikely to yield
a solution to the amenability problem for $F$.
Much of what is contained in this note was circulated to a limited number of people at
Vanderbilt and SUNY Binghamton during Shavgulidze's visit to the US in January 2010.
It will be integrated into a broader survey article \cite{anal_amen_F}.
I would like to acknowledge the hard work of all those involved in the project of reading Shavgulidze's postings
to the ArXiv.
I became actively involved in the reading project at a relatively late stage and the notes from Matt Brin's seminar
\cite{Shav_notes:Brin} and private communication with Matt Brin, Victor Guba, and Mark Sapir were very helpful.
Much of what follows was precipitated by Matt Brin's talk in the Topology and Geometric Group Theory Seminar
at Cornell on 12/1/2009.

\section{Specific problems with the proofs}

There seems to be general agreement
that the problems with the proof in \cite{Thompson_F_amen:Sh} (and \cite{amen_diffeo:ArXiv}) are limited to
Lemma 2.4 (Lemma 5 of \cite{amen_diffeo:ArXiv})
which asserts that a certain sequence of Borel measures $u_n$ $(n \in \Nbb)$ satisfies a condition
which I will refer to as the \emph{mesh condition}.
There is agreement that the arguments of \cite{amen_diffeo:ArXiv} show that
the existence of a sequence of Borel measures $u_n$ $(n \in \Nbb)$ which satisfy the mesh condition and
additionally an \emph{invariance condition} (the conclusion of Lemma 6 of \cite{amen_diffeo:ArXiv})
are sufficient to establish the amenability of $F$.
It was observed by the original team of readers of \cite{amen_diffeo:ArXiv}
that the original proposed sequence of Borel
measures does \emph{not} in fact satisfy the mesh condition as was claimed in \cite{amen_diffeo:ArXiv}
(see pages 36-39 of \cite{Shav_notes:Brin}).
During Shavgulidze's visit to several US universities in January 2010,
he proposed a revised sequence of measures $u_n$ $(n \in \Nbb)$, this time with
finite support, claiming that they satisfied both the mesh and invariance conditions.
The details of this construction were limited at the time and were supplied much later in
\cite{amen_diffeo2:ArXiv}.

Section \ref{elem_proof}
contains an elementary proof (unlike the one provided in \cite{amen_diffeo2:ArXiv})
that the amenability of $F$ follows from the existence measures such as those constructed
in \cite{amen_diffeo2:ArXiv}.
This argument makes all but pages 10 and 11 of \cite{amen_diffeo2:ArXiv} irrelevant and eliminates the
need for the sophisticated analytical tools which Shavgulidze utilized in his proofs.
A similar but more elaborate argument can be used to show that the existence of a sequence of
Borel measures $u_n$ $(n \in \Nbb)$ satisfying the conclusions Lemmas 5 and 6 of \cite{amen_diffeo:ArXiv}
is equivalent to the amenability of $F$.

The paper \cite{amen_diffeo2:ArXiv} is full of typographical errors, minor mathematical errors,
and in general is poorly written at a basic mechanical level.
This in part makes it difficult to pinpoint the exact location of the real mistake.
Still, a serious error is contained in Lemma 5.
The proof concludes with an estimate concerning the sets $X^{0,l,n}$ but Lemma 5 concerns
the sets $X^{m,l,n}$ for arbitrary $m$.
No justification is given for this discrepancy.
What seems to be implicit is that the functions $\kappa_n$ commute with the
maps $f_1$ and $f_2$ (which are the generators of the group) and this is false.
It is also clear that the construction on pages 10-11 would result in a F\o lner sequence which would
violate the following theorems of \cite{fast_growth_F}:

\begin{thm} \cite{fast_growth_F} \label{tower_growth}
There is a constant $C$ such that if $A$ is a $C^{-n}$-F\o lner set in $F$ (with respect
to the standard generating set)
then $|A| \geq \exp_n(0)$, where
$\exp_0(m) = m$ and $\exp_{n+1}(m) = 2^{\exp_n (m)}$.
\end{thm}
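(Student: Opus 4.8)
The plan is to reduce the lower bound to a self-similar recursion inside $F$ that gains a full exponential per bounded loss of F\o lner quality. Write $m(\epsilon)$ for the least cardinality of an $\epsilon$-F\o lner set in $F$, and more generally $m_n(\epsilon)$ for the analogous quantity in a nested family of subgroups $G_n \leq F$ to be built below. The basic transfer principle I would use is that if $H \leq G$ is generated by elements of $G$-length at most $r$, then any $\epsilon$-F\o lner set $A$ for $G$ is $r\epsilon$-F\o lner for the $H$-generators, and decomposing $A$ into right $H$-cosets shows by averaging that some coset piece is an $O(r\epsilon)$-F\o lner set for $H$; hence $m_G(\epsilon) \geq m_H(O(r\epsilon))$. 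The whole argument then rests on producing, inside $F$, subgroups $G_n$ whose F\o lner functions tower and whose generators sit in a ball of radius growing slowly enough (ideally $O(n)$) with respect to $f_1, f_2$.

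The engine is a lamplighter-style recursion with the large group on top. I would take $G_n$ to be (a copy inside $F$ of) the wreath product $\Zbb \wr G_{n-1}$, with $G_0 = \Zbb$, so that $G_{n-1}$ acts as the \emph{base} group permuting the $\Zbb$-valued lamps. The base case is the elementary estimate that an $\epsilon$-F\o lner set for $\Zbb$ is an interval of length $\gtrsim 1/\epsilon$. For the inductive step, let $A$ be $\epsilon$-F\o lner for $G_n$. Its projection to $G_{n-1}$ is again $O(\epsilon)$-F\o lner, so has size at least $m_{n-1}(O(\epsilon))$; and the F\o lner condition for the lamp generators forces, for configurations ranging over this projected set, that at least $\gtrsim m_{n-1}(O(\epsilon))$ lamp coordinates vary independently through a nontrivial F\o lner set of $\Zbb$. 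Counting configurations yields
\[
  m_n(\epsilon) \;\geq\; 2^{\,c\, m_{n-1}(C_0\epsilon)}
\]
for absolute constants $c, C_0$. The decisive feature of this orientation (the growing group on top, not the lamp group) is that each level costs only a \emph{constant} factor $C_0$ in quality while contributing a genuine exponential; iterating $n$ times and unwinding the base case, one finds $m_n(\epsilon) \geq \exp_n(0)$ as soon as $C_0^n \epsilon \lesssim 1$, i.e. as soon as $\epsilon \leq C^{-n}$ for a suitable $C$.

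The main obstacle, and the part requiring work specific to $F$, is the \emph{realization}: one must embed $G_n = \Zbb \wr G_{n-1}$ into $F$ so that (i) its standard generators have $\{f_1,f_2\}$-length $O(n)$ and (ii) the embedding is undistorted enough that the factor $r$ in the transfer principle does not compound across levels. Here I would exploit the self-similarity of $F$: a single element with suitable expanding dynamics on $[0,1]$ supplies disjointly supported, hence commuting, copies of the lamp groups, while a copy of $G_{n-1}$ already present at a smaller scale permutes them. Controlling these generator lengths and, above all, showing that the combined quality loss from the $n$ restrictions together with the $n$ applications of the configuration-counting bound remains a single geometric factor $C^{-n}$ --- rather than a tower that would swamp the gain --- is the crux; it is exactly this uniform control of the constant $C$ that makes the displayed recursion close and produces the stated bound. (The non-amenable case requires no separate treatment: for each $n$ the inequality is either vacuous, if no $C^{-n}$-F\o lner set exists, or trivially satisfied by the small value $\exp_n(0)$.)
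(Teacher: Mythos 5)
Your architecture --- a transfer principle passing F\o lner sets to finitely generated subgroups, plus an Erschler-type recursion in which wreathing with the large group \emph{acting} exponentiates the F\o lner function --- is reasonable, and you are right that this is the only orientation of the wreath product that could produce towers. But the step you defer as ``the realization'' is not a technical crux that self-similarity of $F$ can overcome; it is provably impossible, and the whole argument dies there. The mechanism you describe (an expanding element of $F$ supplying disjointly supported, commuting copies of the lamps, permuted by a smaller-scale copy of $G_{n-1}$) is the wandering-interval construction, and what it actually produces is $G_{n-1} \wr \Zbb$, with $\Zbb$ on top --- exactly the orientation you correctly discarded because it only yields bounds of type $n^{n^k}$. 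To realize $\Zbb \wr G_{n-1}$ with $G_{n-1}$ acting by its regular representation, you would need the lamp supports to form a family of disjoint sets freely permuted by all of $G_{n-1}$, and for a non-cyclic acting group this collides with piecewise linearity: the element implementing one generator of a $\Zbb^2 \leq G_1$ would have to preserve setwise infinitely many disjoint blocks of lamp supports while displacing every point inside them, which a map with finitely many breakpoints cannot do. Nor is this an artifact of the geometric construction: by Bleak's classification of solvable subgroups of the group of piecewise linear homeomorphisms of $[0,1]$, the group $\Zbb \wr \Zbb^2$ embeds in \emph{no} group of PL homeomorphisms of the interval, in particular not in $F$. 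Since your $G_2 = \Zbb \wr (\Zbb \wr \Zbb)$ contains a copy of $\Zbb \wr \Zbb^2$ (restrict the acting group to a subgroup $\Zbb^2 \leq \Zbb \wr \Zbb$ and the lamps to a single $\Zbb^2$-orbit of indices), your recursion already fails at $n=2$, and a fortiori for all later $n$; no control of generator lengths or constants can repair this.

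For comparison: this note does not prove Theorem \ref{tower_growth} at all --- it quotes it from \cite{fast_growth_F} --- so the proof to measure against is the one there, and it is not a subgroup argument. It is a direct combinatorial analysis of the partial action of $F$ on the family $\Tcal$ of dyadic subdivisions of $[0,1]$ (the same objects that drive Section \ref{elem_proof} of this note): one shows by induction that a measure which is almost invariant at scale $C^{-n}$ must concentrate on subdivisions with rigid combinatorial structure --- Theorem \ref{monotone} is a statement of exactly this kind --- and each inductive step costs only a fixed multiplicative factor in the invariance constant while forcing an exponential increase in the number of subdivisions involved, hence in the size of any F\o lner set. In spirit this is your recursion of ``one exponential per bounded loss of quality,'' but implemented on an $F$-set rather than on a chain of subgroups; and in light of the obstruction above this is a necessity rather than a stylistic choice, since the solvable subgroups that $F$ does contain (iterated wreath products with $\Zbb$ acting at each stage) are too small to give anything beyond bounds of the form $n^{n^k}$.
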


\begin{thm} \cite{fast_growth_F} \label{monotone}
If $\mu$ is a finitely additive probability measure
on $\Tcal$ which is invariant with respect to the action of $F$,
then for every sequence $I_i$ $(i \leq k)$ of intervals satisfying
$0 < \min I_i < \max I_i < \min I_{i+1} < \max I_{i+1} < 1$ for all $i < k$, 
it follows that $\mu$-a.e. $T$ satisfies that the cardinalities
$|T \cap I_i|$ $( i \leq k)$ form a strictly monotonic sequence.
\end{thm}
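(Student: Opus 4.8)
The plan is to work throughout with the count functions $N_I(T) = |T \cap I|$ and to extract the content of invariance in distributional form. First I would record the basic translation of the hypothesis: since $\mu$ is $F$-invariant and $N_I(g^{-1}T) = N_{gI}(T)$ for every $g \in F$, the random vector $(N_{I_0}, \dots, N_{I_k})$ has, under $\mu$, the same distribution as $(N_{gI_0}, \dots, N_{gI_k})$ for \emph{every} $g \in F$. The point to exploit is that $F$ acts on tuples of disjoint interior intervals with a great deal of freedom: it preserves their left-to-right order and the constraint $0 < \min I_i < \max I_i < \dots < 1$, but it can otherwise rescale them arbitrarily, engulf one interval inside another, and realize ``shift'' elements with $gI_i = I_{i+1}$. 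Using this freedom I would first establish the qualitative fact that $\mu$ concentrates on sets $T$ whose counts are large: for every interior interval $I$ and every $m \in \Nbb$, one has $\mu(\{T : N_I(T) \le m\}) = 0$. This is the feature that makes the theorem plausible at all, since a finitely additive invariant measure can behave like a shift-invariant mean on $\Nbb$, for which $n \mapsto n$ and $n \mapsto n+1$ have the same distribution while differing everywhere; distributional invariance is thus perfectly compatible with strict almost-everywhere domination.

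With this in hand I would reduce strict monotonicity of $(N_{I_0}, \dots, N_{I_k})$ to two local assertions: that no two consecutive counts are $\mu$-a.e.\ equal (no ties), and that the sequence $\mu$-a.e.\ has no interior local maximum or minimum (no turning points). For the comparison of neighbours I would use \emph{expanding} and \emph{compressing} elements of $F$: choosing $g$ that fixes one interval pointwise and compresses an adjacent interval $J$ to a proper subinterval $J' \subsetneq J$, invariance equates the joint law of $(N_I, N_J)$ with that of the pointwise-smaller pair $(N_I, N_{J'})$; feeding this through the concentration above, together with the dual expanding move $gI = I^* \subsetneq J$, should produce strict stochastic separations between the counts of nested and of engulfed intervals. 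To turn such distributional separations into genuine almost-everywhere statements I would try to manufacture, for each $n$, a family of $n$ pairwise disjoint (or telescoping) $F$-translates of a putative ``bad'' event, so that finite additivity forces $n \cdot \mu(\text{bad}) \le 1$ and hence $\mu(\text{bad}) = 0$.

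The hard part will be precisely this last conversion, and it is where finite additivity fights back. Because $\mu$ is only finitely additive, the usual limiting device --- enlarging an interval up to all of $(0,1)$, or intersecting a decreasing family of emptiness events --- is unavailable, since $\mu$ of a monotone limit need not equal the limit of the measures. Ruling out ties already requires a careful finite arrangement of disjoint translates that sidesteps the degenerate coincidences at the (now negligible) small values. Ruling out turning points is harder still: it is genuinely order-sensitive and cannot be reduced to the nested/containment case, since strict increase of the counts of the cumulative hulls $[\min I_0, \max I_i]$ says nothing about the disjoint summands $N_{I_i}$. I expect the real work to lie in constructing, from the shift and expansion elements of $F$, an explicit finite ensemble of group elements whose translates of the ``local extremum'' event are pairwise disjoint. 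Finally, as an independent consistency check and a further source of leverage, I would confront any measure produced this way with the Følner-growth bound of Theorem~\ref{tower_growth}, whose tower-exponential lower bound tightly constrains how the counts of an approximately invariant set, and hence of the invariant measure itself, can be distributed.
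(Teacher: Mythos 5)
You should first note that the paper you were asked to match contains no proof of this statement at all: Theorem~\ref{monotone} is quoted from \cite{fast_growth_F}, so your proposal has to stand on its own. It does not, because it is an outline whose decisive step is explicitly deferred. The parts you do carry out are sound in spirit: recasting invariance as equidistribution of the count vectors $(N_{I_0},\dots,N_{I_k})$ under translated tuples of intervals, the concentration lemma ($\mu$-a.e.\ $T$ meets every fixed interval in arbitrarily many points), the reduction to ``no ties'' plus ``no interior local extrema,'' and the correct diagnosis that finite additivity forbids all monotone-limit arguments. But the theorem \emph{is} the step you postpone: you never exhibit a single group element, never verify any disjointness, and say outright that you ``expect the real work to lie in constructing'' the required finite ensembles. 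Two caveats also go unaddressed: the action of $F$ on $\Tcal$ is partial, so the identity $N_I(g^{-1}T)=N_{gI}(T)$ and the equidistribution claim are only usable if the domains $\{T : g\cdot T \text{ is defined}\}$ have full measure (this is part of what invariance must mean here; otherwise the point mass at $\{0,1\}$ is invariant and the theorem is false), and the intervals $I_i$ in the statement are arbitrary, so all group-theoretic manipulations must be routed through dyadic intervals sandwiched inside or around them.

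The deeper problem is that the specific moves you describe cannot supply what is missing. In-place moves --- fixing one interval and compressing or expanding a neighbour --- only produce translates of the bad event that are nested or overlapping. For the \emph{tie} event this can be salvaged: fixing $I$ and compressing $J$ through a decreasing chain $J \supsetneq J_1 \supsetneq J_2 \supsetneq \cdots$ gives translates whose pairwise intersections lie inside $\{T : T\cap (J_i\setminus J_j)=\emptyset\}$, which is null by concentration, so a.e.-disjointness and $n\,\mu(\mathrm{tie})\le 1$ do follow. But for the local-extremum event the same moves give a \emph{decreasing} chain of events of equal measure, from which finite additivity extracts nothing --- exactly the dead end you yourself flag. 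What works instead is an order-rearranging element. For the local-max event $E=\{N_{I_0}<N_{I_1}>N_{I_2}\}$, choose $g\in F$ with $g([\min I_0,\max I_1])\subseteq I_0$ and $g(I_2)\supseteq [\min I_1,\max I_2]$; both containments propagate to all powers, so $g^jI_1\subseteq I_0$ and $g^jI_2\supseteq I_1$ for every $j\ge 1$. Then on $E$ one has $N_{g^jI_1}\le N_{I_0}<N_{I_1}\le N_{g^jI_2}$, so the translated pattern $N_{g^jI_1}>N_{g^jI_2}$ fails and $E\cap g^jE=\emptyset$ \emph{genuinely}, not merely almost everywhere; invariance then gives $\mu(E)\le 1/(n+1)$ for every $n$, and a mirror element (with an interior fixed point inside $I_1$) kills local minima. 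Until a construction of this kind is actually carried out and verified, your argument does not get off the ground; the closing appeal to Theorem~\ref{tower_growth} is a red herring and plays no role in any proof of this statement.
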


\section{The Elementary proof}

\label{elem_proof}

I will first review some notation.
Let $\Dcal$ denote all finite subsets of $[0,1]$ which contain $\{0,1\}$.
Let $\Tcal$ denote the collection of all elements $T$ of $\Dcal$
such that, if $t_i$ $(i \leq k)$ is the increasing enumeration of $T$,
then for all $i < k$, there is are non-negative
integers $p$ and $q$ such that $t_i = p/2^q$ and $t_{i+1} = (p+1)/2^q$.
If $(S,T)$ is a pair of elements of $\Tcal$ of the same cardinality,
then the increasing map from $S$ to $T$
extends linearly on the complement to a piecewise
linear map from $[0,1]$ to $[0,1]$.
The collection of all such maps
under composition is one of the standard models of $F$ \cite{CFP}.
The standard generators for $F$ are the elements
\[
x_0 = (\{0,\frac{1}{2},\frac{3}{4},1\},\{0,\frac{1}{4},\frac{1}{2},1\})
\]
\[
x_1 = (\{0,\frac{1}{2},\frac{3}{4},\frac{7}{8},1\},\{0,\frac{1}{2},\frac{5}{8},\frac{3}{4},1\})
\]
In what follows, \emph{generator} will always refer $x_0$, $x_1$, $x_0^{-1}$, or $x_1^{-1}$.
Set
\[
I_n = \{1-2^{-i} : 0 \leq i \leq n+1\} \cup \{1\}.
\]
If $T$ is in $\Tcal$ and $|I_n| = |T|$, then I will use
$f_T$ to denote the element of $F$ represented by
$(I_n,T)$.

While the amenability of the partial action of $F$ on $\Tcal$ is equivalent to the amenability of
$F$ (see \cite{fast_growth_F} for a direct proof of this using similar notation to the present note),
the action of $F$ on $\Dcal$ is easily seen to be amenable.
In fact if $A \subseteq \Nbb$ is an $\epsilon$-F\o lner set in $\Zbb$, then it is not 
hard to see that
\[
\{ \{0,1-2^{-n-2},1\} : n \in A\}
\]
is a $2\epsilon$-F\o lner set with respect to the action of $F$ on $\Dcal$
(each generator has a translation error of $\epsilon$).
It turns out, however, that if one requires that the F\o lner sequence concentrates on sets
of mesh at most $1/16$, then one again obtains a reformulation of the amenability of $F$.

\begin{prop} \label{fin_supt}
$F$ is amenable provided that, for every $\epsilon$ there is a finite
$\Acal \subseteq \Dcal$ such that
\[
|\Acal \symdif (\Acal \cdot x_i)| < \epsilon |\Acal|
\]
for each $i < 2$ and for every $X$ in $\Acal$,
$||X|| \leq 1/16$.
\end{prop}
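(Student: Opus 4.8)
The plan is to reduce the statement to the production of an approximately left-invariant mean on $F$, using the equivalence between the amenability of $F$ and the amenability of the partial action of $F$ on $\Tcal$ recorded in \cite{fast_growth_F}. The mechanism that ties the two actions together is the following observation about the $f_T$ notation: if $T \in \Tcal$ contains all of the (finitely many) breakpoints of a generator $x_i$, then $x_i$ is linear on each interval of $T$, so $x_i(T) \in \Tcal$ and, comparing the two standard maps out of $I_n$, one has $f_{x_i(T)} = x_i f_T$. Thus a map from small-mesh subsets of $\Dcal$ into $\Tcal$ which is equivariant for the $\Dcal$-action on the source and the subdivision action on the target will convert the F\o lner hypothesis on $\Acal$ into a left-multiplication F\o lner estimate in $F$.

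Concretely, I would fix the finite set $B$ of all breakpoints of $x_0, x_1, x_0^{-1}, x_1^{-1}$ and, for $X \in \Dcal$, let $\Phi(X) \in \Tcal$ be the coarsest standard dyadic subdivision which contains $B$ and separates the interior points of $X$, in the sense that each closed interval of $\Phi(X)$ meets $X \setminus \{0,1\}$ in at most one point. Setting $\psi(X) = f_{\Phi(X)}$, I would then push the uniform probability measure on $\Acal$ forward along $\psi$ to obtain a probability measure $\mu_\epsilon = \tfrac{1}{|\Acal|}\sum_{X \in \Acal} \delta_{\psi(X)}$ on $F$. Since the fibers of $\Phi$ are large (the non-dyadic coordinates of $X$ are forgotten), it is essential to work at the level of measures and total variation rather than with cardinalities of subsets of $\Tcal$; the F\o lner hypothesis on $\Acal$ will transfer to a bound on $\|x_i \mu_\epsilon - \mu_\epsilon\|$ exactly when $\Phi$ intertwines the two actions.

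The heart of the matter is therefore the equivariance $\Phi(X \cdot x_i) = \Phi(X) \cdot x_i$. Away from $B$ each generator is a dyadic affine map of slope a power of two, and such a map carries the coarsest separating subdivision of a standard dyadic interval onto the coarsest separating subdivision of its image; on these pieces the desired identity holds exactly. The only possible discrepancy is confined to a fixed, bounded number of subdivision intervals adjacent to the points of $B$. This is where the mesh bound $\|X\| \leq 1/16$ enters: since $1/16$ is half the length of the shortest linear piece of the generators, every gap of $X$ meets at most one breakpoint, so whatever discrepancy occurs at a point of $B$ is $O(1)$, uniformly in $X$.

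The main obstacle is the disposal of this uniform $O(1)$ discrepancy. At the level of elements of $F$ even a single differing subdivision point makes $\psi(X \cdot x_i)$ and $x_i \psi(X)$ distinct, and the discrepancy can occur for a positive fraction of the $X \in \Acal$ rather than a vanishing one, so it cannot simply be ignored in the limit. I would dispose of it by perturbation: moving the non-dyadic coordinates of the points of $X$ slightly leaves $\Phi(X)$ and the entire combinatorial picture unchanged, provided no coordinate crosses a dyadic rational, yet can be arranged so that every $X \in \Acal$ lies in general position relative to $B$, in which position the coarsest separating subdivision is already stable across each breakpoint. Granting this, $\Phi$ is genuinely equivariant on the perturbed F\o lner set, the intertwining $f_{x_i(T)} = x_i f_T$ yields $\|x_i \mu_\epsilon - \mu_\epsilon\| < C\epsilon$, and a weak-$*$ limit of the $\mu_\epsilon$ is a left-invariant mean on $F$. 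The step I expect to require the most care is verifying that general position genuinely annihilates the breakpoint discrepancy rather than merely bounding it.
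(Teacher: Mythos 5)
Your overall architecture is the same as the paper's: an assignment $X \mapsto \Phi(X)$ of a standard dyadic partition to each small-mesh configuration, the intertwining identity $f_{x_i \cdot T} = x_i \circ f_T$, and a pushforward of the uniform measure to transfer the F\o lner estimate into $F$. (Your insistence on working with measures and total variation is sound, and it correctly handles the non-injectivity of $X \mapsto f_{\Phi(X)}$, a point the paper itself treats very tersely.) The fatal gap is the perturbation step. The hypothesis $|\Acal \symdif (\Acal \cdot x_i)| < \epsilon |\Acal|$ is a statement about exact coincidences: for most $X \in \Acal$, the set $X \cdot x_i$ is literally another element of $\Acal$. If you perturb the points of each $X \in \Acal$ separately to achieve general position, those coincidences are destroyed: for $X$ and $Y = X \cdot x_i$ both in $\Acal$, the perturbed sets satisfy $Y' = X' \cdot x_i$ only if the perturbation of $Y$ is chosen to be exactly the $x_i$-image of the perturbation of $X$. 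Independent ``slight'' perturbations generically make $\Acal' \cap (\Acal' \cdot x_i)$ empty, i.e.\ $|\Acal' \symdif (\Acal' \cdot x_i)| \approx 2|\Acal'|$, so the hypothesis you need to invoke evaporates. The perturbation would therefore have to be carried out equivariantly along the orbit structure of $\Acal$, which your sketch never addresses and which is a genuine obstruction (cycles $X \cdot w = X$ with $w \neq e$ force the perturbed set to lie in the fixed-point set of $w$, which can consist of isolated dyadic points). Note also that general position cannot be assumed for free: the hypothesis permits $\Acal$ to consist entirely of sets of dyadic rationals.

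A second, related problem is that the breakpoint discrepancy you are trying to perturb away is created by your own requirement $B \subseteq \Phi(X)$: the generators do not preserve $B$ (for instance $7/8$ is not in the image of $B$ under $x_0$), so $x_i \cdot \Phi(X)$ is guaranteed to contain $x_i \cdot B$ rather than $B$, and equivariance can hold only if containment of the relevant breakpoints is \emph{forced} by the separation requirement together with the mesh bound --- which is true, but is precisely what your local ``coarsest maps to coarsest'' heuristic does not establish. The paper eliminates both difficulties at once by a different choice of partition: $T(X)$ is the \emph{maximal} standard dyadic partition all of whose half-open intervals $[s,t)$ contain a point of $X$. Then $\|X\| \leq 1/16$ gives $\|T(X)\| \leq 1/8$, so $T(X)$ contains the generators' breakpoints automatically, with nothing inserted by hand, and exact equivariance $T(x_i \cdot X) = x_i \cdot T(X)$ holds for \emph{every} $X$, dyadic points included, by a two-sided inclusion plus a cardinality count: $x_i \cdot T(X)$ has the defining property for $x_i \cdot X$, hence is contained in $T(x_i \cdot X)$, and applying $x_i^{-1}$ (legitimate because the breakpoints of $x_i^{-1}$ lie in $x_i \cdot T(X)$) gives the reverse containment. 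If you wish to keep your coarsest-separating partition, the correct route is this same two-sided argument, using half-open intervals so that dyadic points of $X$ cause no trouble and deriving containment of the breakpoints from the mesh bound rather than imposing $B$ by fiat; the perturbation device should be deleted entirely, since it cannot be implemented compatibly with the F\o lner hypothesis.
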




\begin{defn}
If $X$ is in $\Dcal$, define $T(X)$ to be the maximum element of $\Tcal$
such that if $s < t$ are in $T$, then there is an $x$ in $X$ with
$s \leq x < t$.
\end{defn}

\begin{claim}
If $T$ is in $\Tcal$ and
$g$ is represented by $(U,V)$ and
$U$ is contained in $T$, then $g \cdot T$ is in $\Tcal$.
Furthermore $g \circ f_T = f_{g \cdot T}$.
\end{claim}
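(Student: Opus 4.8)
The plan is to regard $g=(U,V)$ as the increasing piecewise-linear homeomorphism of $[0,1]$ that it represents, so that $g\cdot T$ is literally the image $\{g(t):t\in T\}$, and then to verify both assertions by tracking $g$ interval by interval. Everything reduces to one elementary fact about standard dyadic intervals, which I would isolate first: the increasing linear bijection between two standard dyadic intervals carries every partition of its domain into standard dyadic subintervals to a partition of its range into standard dyadic subintervals. Concretely, the increasing linear bijection from $[p/2^q,(p+1)/2^q]$ to $[p'/2^{q'},(p'+1)/2^{q'}]$ is $x\mapsto p'/2^{q'}+2^{q-q'}(x-p/2^q)$, and a one-line computation shows it sends a dyadic point $j/2^{r}$ with $r\ge q$ to a dyadic point with denominator $2^{\,r+q'-q}$, so that a length-$2^{-r}$ standard subdivision of the domain becomes a length-$2^{-(r+q'-q)}$ standard subdivision of the range. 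This is the only genuine computation in the proof.

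For the first assertion I would use that, since $U\subseteq T$ and both lie in $\Tcal$, the subdivision given by $T$ refines the one given by $U$; hence each interval $[u_i,u_{i+1}]$ of $U$ is a standard dyadic interval inside which the points of $T$ cut out standard dyadic subintervals. The map $g$ sends $[u_i,u_{i+1}]$ linearly onto the standard dyadic interval $[v_i,v_{i+1}]$, so by the fact above it carries the $T$-points lying in $[u_i,u_{i+1}]$ onto a partition of $[v_i,v_{i+1}]$ into standard dyadic subintervals. Gluing over all $i$ exhibits $g\cdot T$ as a partition of $[0,1]$ into standard dyadic intervals, that is, as an element of $\Tcal$.

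For the second assertion I would first note that $g$ is an order-preserving bijection of $[0,1]$, so $|g\cdot T|=|T|=|I_n|$ and therefore $f_{g\cdot T}$ is represented by $(I_n,g\cdot T)$. It then suffices to check that $g\circ f_T$ is linear on each interval of $I_n$ and sends the $j$-th point of $I_n$ to the $j$-th point of $g\cdot T$. The latter is immediate, since $f_T$ sends the $j$-th point of $I_n$ to the $j$-th point of $T$ and $g$, being increasing, sends the $j$-th point of $T$ to the $j$-th point of $g\cdot T$. For linearity, $f_T$ maps each $I_n$-interval affinely onto a $T$-interval, and $g$ is affine on every $T$-interval because $U\subseteq T$ forces each $T$-interval to sit inside a single $U$-interval, on which $g$ is affine; hence $g\circ f_T$ is a composition of affine maps on each $I_n$-interval. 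Agreeing at the endpoints and being affine in between, $g\circ f_T$ and $f_{g\cdot T}$ coincide. The only real content, and the step I would treat most carefully, is the dyadic fact of the first paragraph together with the double role of the hypothesis $U\subseteq T$: it simultaneously guarantees that $T$ subdivides each $U$-interval dyadically and that $g$ is affine on each $T$-interval, after which both conclusions follow by routine gluing.
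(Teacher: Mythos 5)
Your proposal is correct, but it is worth noting that the paper does not actually argue this claim at all: its entire proof is a citation to \cite{CFP}, where these are standard facts about the dyadic-subdivision (tree-pair) description of Thompson's group $F$. What you have written is a complete, self-contained verification of exactly those facts. Your decomposition is the natural one: the single computational ingredient is that the increasing affine bijection between two standard dyadic intervals carries standard dyadic subdivisions of the domain to standard dyadic subdivisions of the range, and the hypothesis $U \subseteq T$ is used twice --- once so that $T$ subdivides each $U$-interval into standard dyadic pieces (giving $g \cdot T \in \Tcal$), and once so that $g$ is affine on each $T$-interval (giving that $g \circ f_T$ is affine on each $I_n$-interval and hence, agreeing with $f_{g \cdot T}$ at the points of $I_n$, equal to it). Both uses are justified correctly, including the implicit point that a standard dyadic interval inside $[p/2^q,(p+1)/2^q]$ necessarily has denominator exponent $r \geq q$. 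The trade-off between the two treatments is the usual one: the paper's citation keeps an already short note brief by deferring to a standard reference, while your argument makes the claim verifiable on the spot and isolates precisely which structural features of standard dyadic intervals are being used.
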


\begin{proof}
See \cite{CFP}.
\end{proof}

\begin{claim}
If $X$ is in $\Dcal$ and
$f$ is represented by $(U,V)$, $U$ is contained in
$T(X)$, then $f \cdot T(X) = T(f \cdot X)$.
\end{claim}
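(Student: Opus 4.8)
The plan is to characterize $T(Y)$ for an arbitrary $Y$ in $\Dcal$ by a local \emph{maximality} property and then verify that $f \cdot T(X)$ satisfies it, where I write $Y = f \cdot X$. First I would record what $T(Y)$ is. Writing intervals half-open, $T(Y)$ is the unique standard dyadic partition $P$ such that (a) each interval of $P$ contains a point of $Y$, and (b) for each interval $[s,t)$ of $P$, at least one of its two dyadic halves $[s,m)$, $[m,t)$ contains no point of $Y$. Condition (a) is the defining property, and (b) is exactly the assertion that $P$ cannot be refined while preserving (a); since $T(Y)$ is by definition the \emph{maximum} partition with property (a), and any partition with property (a) refines into it, it is characterized by (a) together with (b).

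The key local observation is that $f$ behaves like a dyadic isomorphism on each interval of $T(X)$. Since $U \subseteq T(X)$, the partition $T(X)$ refines $U$, so each interval $J$ of $T(X)$ lies inside a single interval of $U$ and hence $f$ is affine on $J$. By the previous claim, $f \cdot T(X) \in \Tcal$, so $J$ is carried to a \emph{standard dyadic} interval $f(J)$, and the intervals of $f \cdot T(X)$ are exactly these $f(J)$. Because $f|_J$ is affine it preserves midpoints, so it carries the two dyadic halves $J_0, J_1$ of $J$ onto the two dyadic halves of $f(J)$. Finally, as $f$ is a bijection and $Y = f \cdot X$, for any subinterval $K \subseteq J$ we have $f(X \cap K) = Y \cap f(K)$; in particular $K$ meets $X$ if and only if $f(K)$ meets $Y$.

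With this in hand the verification is short. For property (a): each interval $J$ of $T(X)$ contains a point of $X$, so $f(J)$ contains a point of $Y$, and thus $f \cdot T(X)$ satisfies (a). For property (b): since $T(X)$ is the maximum partition with property (a) for $X$, no interval $J$ of $T(X)$ can be subdivided while keeping (a), which means at least one dyadic half $J_i$ of $J$ contains no point of $X$. Transporting across $f$, the corresponding dyadic half $f(J_i)$ of $f(J)$ contains no point of $Y$, so $f \cdot T(X)$ satisfies (b). By the characterization of $T(Y)$, I conclude $f \cdot T(X) = T(Y) = T(f \cdot X)$.

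I expect the main obstacle to be the local dyadic observation of the second paragraph, namely checking that $f$ carries the dyadic subdivision of each interval $J$ of $T(X)$ onto the dyadic subdivision of $f(J)$. This is where the hypothesis $U \subseteq T(X)$ is essential: it forces $f$ to be affine on every interval of $T(X)$, and affineness together with the previous claim (that $f(J)$ is again a standard dyadic interval) is precisely what makes the midpoint of $J$ map to the midpoint of $f(J)$. Once maximality is phrased locally as condition (b), no global comparison of the two dyadic trees is needed, and the half-open convention handles the endpoint $1$ automatically.
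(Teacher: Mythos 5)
Your proof is correct, but it takes a genuinely different route from the paper's. The paper's proof is a two-sided maximality sandwich: $f \cdot T(X)$ lies in $\Tcal$ by the previous claim and, since $f$ is increasing, each consecutive pair of its points traps a point of $f \cdot X$, so maximality of $T(f \cdot X)$ gives $f \cdot T(X) \subseteq T(f \cdot X)$; since $V \subseteq f \cdot T(X) \subseteq T(f \cdot X)$, the previous claim applies equally to $f^{-1}$ (which is represented by $(V,U)$), giving $f^{-1} \cdot T(f \cdot X) \subseteq T(X)$; the two inclusions force $|T(X)| = |f \cdot T(X)| = |T(f \cdot X)|$, and equality of finite cardinalities upgrades the first inclusion to equality. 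That argument uses nothing about $f$ beyond monotonicity. You instead characterize $T(Y)$ intrinsically by your conditions (a) and (b) and verify both for $f \cdot T(X)$ directly, which requires knowing that $f$ is affine on each interval of $T(X)$ and hence carries dyadic halves to dyadic halves. What your route buys: no appeal to $f^{-1}$ or to counting, and a local explanation of why the image partition is maximal; it is essentially the argument the paper itself gives later for the analogous compatibility statement about $\bar T$ and $\theta(f)$ in the section on the mesh and invariance conditions, where midpoint preservation plays exactly this role. What it costs: the uniqueness half of your characterization --- the assertion that (b) is ``exactly'' the impossibility of refining while preserving (a) --- silently rests on the fact that any proper subdivision of a standard dyadic interval into standard dyadic intervals must contain its midpoint (equivalently, no standard dyadic subinterval straddles the midpoint). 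That fact is true and elementary (an integrality check on numerators shows a dyadic subinterval of $[p/2^q,(p+1)/2^q)$ of length at most $2^{-q-1}$ cannot contain $(2p+1)/2^{q+1}$ in its interior), but it is an unproved step in your write-up --- and note that it is needed in the direction you use, not just in the easy direction (adding a midpoint stays in $\Tcal$) that suffices for the mesh claim that follows. The paper's sandwich argument avoids needing it at all.
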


\begin{proof}
Let $X$, $f$ be as above.
Since $f \cdot T(X)$ is in $\Tcal$,
it is contained in $T(f \cdot X)$
(this follows from the fact that $f$ is increasing).
This in turn implies $f^{-1} \cdot T(f \cdot X)$
is in $\Tcal$ and therefore is contained in
$T(X)$.
Hence $|T(X)| = |f \cdot T(X)| = |T(f \cdot X)|$
and therefore we must have
$f \cdot T(X) = T(f \cdot X)$.
\end{proof}

\begin{claim}
If $||X|| \leq 1/16$, then $||T(X)|| \leq 1/8$.
In particular $I_0$ and $I_1$ are contained
in $T(X)$.
\end{claim}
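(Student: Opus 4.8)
The plan is to reduce everything to a single elementary observation about $X$ and then exploit the maximality built into the definition of $T(X)$. The observation is this: since $0$ and $1$ both belong to $X$ and consecutive elements of $X$ differ by at most $\|X\|$, every half-open interval $[a,b) \subseteq [0,1]$ with $b - a > \|X\|$ must contain a point of $X$. To prove it I would argue by contradiction: if $[a,b) \cap X = \emptyset$, then $a \notin X$, so $a > 0$ (as $0 \in X$), and the elements $x^- = \max\{x \in X : x < a\}$ and $x^+ = \min\{x \in X : x \geq b\}$ are both defined (using $0,1 \in X$ and $b \leq 1$). No element of $X$ lies strictly between them, so they are consecutive in $X$, yet $x^+ - x^- > b - a > \|X\|$, contradicting the bound on the gaps of $X$.

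With this in hand I would establish the mesh bound as follows. Suppose toward a contradiction that $\|T(X)\| > 1/8$. Because the intervals determined by consecutive points of an element of $\Tcal$ have lengths that are powers of $2$, the largest such length must already be $\geq 1/4$; fix a consecutive pair $s < t$ of $T(X)$ with $t - s \geq 1/4$. Letting $m$ be the dyadic midpoint of $[s,t]$, the set $T(X) \cup \{m\}$ again lies in $\Tcal$ and properly contains $T(X)$ (under inclusion). Each of the two halves $[s,m)$ and $[m,t)$ has length $\geq 1/8 > 1/16 \geq \|X\|$, so by the observation above each contains a point of $X$, while every other interval of $T(X)$ is untouched and hence still contains a point of $X$. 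Thus $T(X) \cup \{m\}$ satisfies the defining condition of $T(X)$ and is strictly larger, contradicting the maximality of $T(X)$. Therefore $\|T(X)\| \leq 1/8$.

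For the final assertion I would invoke the fact that a dyadic rational $k/2^j$ can never lie in the open interior of a standard dyadic interval of length at most $2^{-j}$, since on the finer grid $\{i/2^q\}$ (with $2^{-q} \leq 2^{-j}$) it is automatically an endpoint. Since $\|T(X)\| \leq 1/8$, this applies to $1/2$ (which only needs mesh $\leq 1/2$) and to $3/4$ (which only needs mesh $\leq 1/4$), so both $1/2$ and $3/4$ occur as endpoints, i.e.\ as elements of $T(X)$. As $0,1 \in T(X)$ automatically, this yields $I_0 = \{0,\tfrac12,1\} \subseteq T(X)$ and $I_1 = \{0,\tfrac12,\tfrac34,1\} \subseteq T(X)$.

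The one genuinely delicate point—and where I would take the most care—is the interaction between the half-open convention $s \leq x < t$ in the definition of $T(X)$ and the maximality clause: one must verify that the bisected set truly belongs to the family over which the maximum is taken. This is exactly what the auxiliary interval observation secures, and it is worth checking it explicitly for the rightmost interval, where the relevant endpoint is $1$ and the half-open condition forces the witnessing point of $X$ to lie strictly below $1$. Everything beyond this is routine bookkeeping with dyadic denominators.
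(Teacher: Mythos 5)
Your proof is correct and takes essentially the same approach as the paper: both exploit the maximality of $T(X)$ by bisecting a standard dyadic interval and using the fact that a half-open interval of length exceeding $||X||$ must contain a point of $X$. The only differences are organizational—you run the argument by contradiction (via the dyadic observation that a length $>1/8$ must be $\geq 1/4$) where the paper argues directly that one half of each bisected interval misses $X$ and hence has length $\leq 1/16$, and you spell out the grid argument showing $1/2, 3/4 \in T(X)$, which the paper leaves implicit.
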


\begin{proof}
Suppose $X$ is as above and that $s < t$ are consecutive
elements of $T(X)$.
Since $T(X)$ is maximal, either
there is no $x$ is $X$ such that $s \leq x < (s+t)/2$ or
else there is no $x$ in $X$ such that
$(s+t)/2 \leq x < t$.
Hence there is an interval of length
$(t-s)/2$ contained in $[0,1]$ and disjoint from $X$.
It follows that $(t-s)/2 \leq 1/16$ and therefore
that $t-s \leq 1/8$.
\end{proof}

Putting this together, we have that $i = 0,1$ and
$\Zcal \subseteq \Dcal$ consists only of $Z$ such that
$|Z| < 1/8$, then
\[
\{f_{T(X)} : X \in (x_i \cdot \Zcal)\} = 
\{f_{T(x_i \cdot Z)} : Z \in \Zcal\}
\]
\[
= \{f_{x_i \cdot T(Z)} : Z \in \Zcal\}
= \{x_i \circ f_{T(Z)} : Z \in \Zcal\}
\]
In particular, if additionally $\Zcal$ is finite and
\[
|(\Zcal \cdot x_i) \symdif \Zcal| < \epsilon |\Zcal|
\]
and $\Acal = \{f_{T(Z)} : Z \in \Zcal\}$,
then $|(x_i \circ \Acal) \symdif \Acal| < \epsilon |\Acal|$.
This finishes the proof of Proposition \ref{fin_supt}.

\def\Dbar{\leavevmode\lower.6ex\hbox to 0pt{\hskip-.23ex \accent"16\hss}D}


\begin{thebibliography}{1}

\bibitem{Shav_notes:Brin}
Matthew Brin.
\newblock On {S}havgulidze's proof the amenability of some discrete groups of
  homeomorphisms of the unit interval.
\newblock ArXiv preprint 0908.1353v5, Sept. 2009.

\bibitem{CFP}
J.~W. Cannon, W.~J. Floyd, and W.~R. Parry.
\newblock Introductory notes on {R}ichard {T}hompson's groups.
\newblock {\em Enseign. Math. (2)}, 42(3-4):215--256, 1996.

\bibitem{fast_growth_F}
Justin~Tatch Moore.
\newblock Fast growth in {F}\o lner function for {T}hompson's group $f$.
\newblock ArXiv Preprint 0905.1118, Aug. 2009.

\bibitem{anal_amen_F}
Justin~Tatch Moore.
\newblock Analysis of the amenability problem for {T}hompson's group ${F}$.
\newblock in preparation, February 2011.

\bibitem{amen_disc_diff:abs}
E.~T. Shavgulidze.
\newblock Amenability of discrete subgroups of the group of diffeomorphisms of
  the circle.
\newblock {\em Russ. J. Math. Phys.}, 16(1):130--132, 2009.

\bibitem{Thompson_F_amen:Sh}
E.~T. Shavgulidze.
\newblock The {T}hompson group {$F$} is amenable.
\newblock {\em Infin. Dimens. Anal. Quantum Probab. Relat. Top.},
  12(2):173--191, 2009.

\bibitem{amen_diffeo:ArXiv}
E.T. Shavgulidze.
\newblock About amenability of subgroups of the group of diffeomorphisms of the
  interval.
\newblock ArXiv Preprint 0906.0107, May 30 2009.

\bibitem{amen_diffeo2:ArXiv}
E.T. Shavgulidze.
\newblock About amenability of subgroups of the group of diffeomorphisms of the
  interval $[0,1]$.
\newblock ArXiv Preprint 1101.2888, January 14 2011.

\end{thebibliography}
\end{document}